\def \bui#1#2{\mathrel{\mathop{\kern 0pt#1}\limits^{#2}}}
\newtheorem{theorem}{Theorem}[section]
\newtheorem{corollary}[theorem]{Corollary}
\newtheorem{lemma}[theorem]{Lemma}
\newtheorem{definition}[theorem]{Definition}
\newtheorem{remark}[theorem]{Remark}
\newtheorem{notation}[theorem]{Notation}
\numberwithin{equation}{section}
\begin{document}
\title{Scalar curvature under the collapse of metric}
\author[K.~Nguyen]{Khoi Nguyen}
\address{Department of Mathematics \\
Texas Christian University \\
Fort Worth, Texas 76129, USA}
\email[K.~Nguyen]{khoi.nguyen@tcu.edu}
\subjclass[2010]{57R30; 53C12; 57R15}
\keywords{Scalar curvature, foliation, bundle-like metric}

\begin{abstract}
We prove a formula involving the scalar curvature of a Riemannian manifold endowed with a distribution in terms of an adapted orthonormal frame for its tangent bundle. Using the formula, we then investigate the effect of collapsing the metric along the distribution on the scalar curvature. This result contributes to the question of finding a positive scalar curvature metric on a Riemannian manifold.

\end{abstract}

\maketitle

\section{Introduction}
The problem of finding conditions on a Riemannian manifold so that it admits a positive scalar curvature (PSC) metric is a well-known open problem within the research community. The index of the Dirac operator on a spin manifold is an example of an obstruction to the existence of PSC metric; Rosenberg showed additional obstructions in (\cite{MR866507}). On the other hand, Gromov and Lawson (\cite{MR577131}) proved that the connected sum of two Riemannian manifolds, each admitting a PSC metric, admits a PSC metric as well. There are many similar results about those metrics, some of which are catalogued in Carlotto (\cite{MR4218620}).\\
\indent In this paper, we investigate the effect of collapsing a metric along a distribution on the scalar curvature. Specifically, it is well-known that if $M$ and $N$ are Riemannian manifolds and if $M$ admits a PSC metric, collapsing along $M$ in the product $M\times N$ produces a PSC metric of $M\times N$. We investigate a similar situation, but on a single manifold. Precisely, here is our formulation:\\
\indent Consider a $n$-dimensional Riemannian manifold $(M,g)$ and its tangent bundle $TM$. Suppose we can split $TM$ into two orthogonal sub-bundles $X$ and $Y$, with ranks $r$ and $s$ respectively. Declare $g'=g_X\oplus\left(\frac{1}{f^2}g_Y\right)$ to be a new metric on $M$, where $g_X$ is the metric restricted to $X$ and $g_Y$ is the metric restricted to $Y$. We then find a condition on the sub-bundles $X\,, Y$ and the constant $f$ so that if the scalar curvature of the manifold when restricted to one of the sub-bundles is positive, we can ensure that there is a PSC metric on the whole manifold.\\
\indent Let $(M,g)$ be a Riemannian manifold and $p\in M$. In a neighborhood $U$ of $p$ in $M$, we choose an adapted orthonormal frame of $M$ with respect to $g$. Denote that orthonormal frame by $\beta=\{e_1,e_2,\cdots,e_n\}$ and the corresponding dual coframe by $\beta'=\{e^1,e^2,\cdots,e^n\}$. \\
\indent The following definition is motivated by the definition of the structure constants of a Lie algebra.
\begin{definition}
The \textbf{structure functions} of $\beta$ are the functions $c^k_{ij}$ so that
    $$[e_i,e_j]=\sum\limits_kc^k_{ij}e_k.$$
\end{definition}
\indent Using the definition above, in this paper, we derived (see Theorem 2.4) a formula of the scalar curvature of $M$ entirely in terms of the structure functions of a local adapted orthonormal frame. We then derived (see Theorem 3.4) the formula relating the scalar curvature of $M$ and the scalar curvatures when restricted to the sub-bundles $X$ and $Y$.\\
\indent In Section 4.1, we provide some cases where our formula in Theorem 3.4 can be further simplified to aid with future computations should they arise. In Section 4.2, we investigate the effect of collapsing the metric along the sub-bundle $Y$. In the case where $Y$ is everywhere non-involutive (see Definition 4.4), collapsing along $Y$ will make the scalar curvature of $M$ decrease without bound (see Corollary 4.5). In the case that $Y$ is tangent bundle to a Riemannian foliation with leaves of PSC, then collapsing the metric along $Y$ causes the scalar curvature of $M$ to increase without bound (see Corollary 4.8). Utilizing Theorem 3.4, we are able to obtain the same result under weaker conditions, where the foliation need not be Riemannian. We call the condition on the foliation a nearly positively bundle-like metric (see Definition 4.6). With this condition, collapsing the metric along $Y$ causes the scalar curvature of $M$ to increase without bound (Theorem 4.7). In Section 5, we provide two examples to illustrate Corollary 4.5 and Theorem 4.7. \\
\indent This paper was part of my undergraduate Honors thesis at Texas Christian University under the guidance of Dr. Ken Richardson.
\section{A useful formula for the scalar curvature}
\indent In this section, we will derive a formula for the scalar curvature of a manifold in terms of the structure functions defined in Definition 1.1. But first, there are a few reminders about the quantities used in our computations below.\\
\indent The Christoffel symbols with respect to $\beta$ are the functions $\mathbf{\Gamma}^k_{ij}$ so that $\nabla_{e_i}e_j=\sum\limits_k\mathbf{\Gamma}^k_{ij} e_k$. They can be easily calculated (\cite{MR3887684}) via the formula $\mathbf{\Gamma}^k_{ij}=\frac{1}{2}(c^k_{ij}-c^i_{jk}+c^j_{ki})$. Next, the connection 1-form of a manifold $M$ with respect to the orthonormal frame $\beta$ can be defined as a matrix of 1-forms $\omega$ so that $\omega^j_{\hspace{0.5 em} i}=\sum\limits_p\mathbf{\Gamma}^j_{pi}e^p$. Finally, the curvature 2-form of $M$ is defined as $\Omega=d\omega+\omega\wedge\omega$. It is also well-known (\cite{MR1735502}) that $\Omega^k_{{\hspace{0.5 em}}j}=\sum\limits_{j,k}\frac{1}{2}R^k_{\hspace{0.5em}jiq}e^i\wedge e^q$, and thus the scalar curvature $S$ of $M$ can be calculated via the formula $S=\sum\limits_{j,k} \Omega^k_{\hspace{0.5em}j} (e_k,e_j)$.
\begin{notation}
Unless otherwise stated, we will use the following summation convention: Whenever there are repeated indices in the terms, those indices are implicitly summed over their appropriate range.
\end{notation}
\begin{lemma}
In terms of the structure functions and the dual co-frame, the curvature 2-form satisfies
    $$\Omega^k_{\hspace{0.5em}j}=\frac{1}{2}e_q(c^k_{ij}-c^i_{jk}+c^j_{ki})e^q\wedge e^i+\frac{1}{2}(c^k_{ij}-c^i_{jk}+c^j_{ki})de^i+\frac{1}{4}(c^k_{ml}-c^m_{lk}+c^l_{km})(c^l_{pj}-c^p_{jl}+c^j_{lp})e^m\wedge e^p.$$
\end{lemma}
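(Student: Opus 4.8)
The plan is to substitute the definitions directly into the defining relation $\Omega=d\omega+\omega\wedge\omega$ and read off the three terms componentwise. Writing $\Omega^k_{\hspace{0.5em}j}=d\omega^k_{\hspace{0.5em}j}+\sum_l\omega^k_{\hspace{0.5em}l}\wedge\omega^l_{\hspace{0.5em}j}$, I would handle the exterior-derivative piece and the wedge piece separately, keeping throughout the abbreviation $\mathbf{\Gamma}^k_{ij}=\frac{1}{2}(c^k_{ij}-c^i_{jk}+c^j_{ki})$ so that the substitution into structure functions is postponed to the very end.

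For the first piece, start from $\omega^k_{\hspace{0.5em}j}=\sum_i\mathbf{\Gamma}^k_{ij}e^i$ and apply the Leibniz rule for the exterior derivative, $d\omega^k_{\hspace{0.5em}j}=\sum_i(d\mathbf{\Gamma}^k_{ij})\wedge e^i+\sum_i\mathbf{\Gamma}^k_{ij}\,de^i$. Expanding the differential of each coefficient function against the dual coframe, $d\mathbf{\Gamma}^k_{ij}=\sum_q e_q(\mathbf{\Gamma}^k_{ij})e^q$, produces the term $\sum_{q,i}e_q(\mathbf{\Gamma}^k_{ij})\,e^q\wedge e^i$; inserting the abbreviation recovers exactly the first summand of the asserted formula, while the remaining $\sum_i\mathbf{\Gamma}^k_{ij}\,de^i$ is the second summand.

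For the wedge piece, substitute $\omega^k_{\hspace{0.5em}l}=\sum_m\mathbf{\Gamma}^k_{ml}e^m$ and $\omega^l_{\hspace{0.5em}j}=\sum_p\mathbf{\Gamma}^l_{pj}e^p$ and multiply, so that $\sum_l\omega^k_{\hspace{0.5em}l}\wedge\omega^l_{\hspace{0.5em}j}=\sum_{l,m,p}\mathbf{\Gamma}^k_{ml}\mathbf{\Gamma}^l_{pj}\,e^m\wedge e^p$. Replacing each Christoffel symbol by its structure-function expression $\mathbf{\Gamma}^k_{ml}=\frac{1}{2}(c^k_{ml}-c^m_{lk}+c^l_{km})$ and $\mathbf{\Gamma}^l_{pj}=\frac{1}{2}(c^l_{pj}-c^p_{jl}+c^j_{lp})$ yields the third summand, with the repeated index $l$ silently contracted. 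Adding the two pieces gives the claimed identity.

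The computation is entirely mechanical, so the only genuine obstacle is bookkeeping: keeping the many dummy indices distinct, applying the implicit summation convention of Notation 2.1 consistently across all three terms, and choosing index names so that the output matches the stated labeling (in particular aligning $m,p$ as the wedge indices and $l$ as the contracted middle index). Some care is also required in the exterior-derivative step to express $d\mathbf{\Gamma}^k_{ij}$ correctly as $e_q(\mathbf{\Gamma}^k_{ij})e^q$ against the dual coframe rather than against coordinate differentials.
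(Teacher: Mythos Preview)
Your proposal is correct and follows essentially the same approach as the paper: both compute $\Omega^k_{\hspace{0.5em}j}=d\omega^k_{\hspace{0.5em}j}+\omega^k_{\hspace{0.5em}l}\wedge\omega^l_{\hspace{0.5em}j}$ by writing $\omega^k_{\hspace{0.5em}j}=\mathbf{\Gamma}^k_{ij}e^i$, applying the Leibniz rule to the $d$-term with $d\mathbf{\Gamma}^k_{ij}=e_q(\mathbf{\Gamma}^k_{ij})e^q$, and multiplying out the wedge term before substituting $\mathbf{\Gamma}^k_{ij}=\tfrac{1}{2}(c^k_{ij}-c^i_{jk}+c^j_{ki})$. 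The only cosmetic difference is that the paper substitutes the structure-function expression for $\mathbf{\Gamma}$ at the outset rather than at the end.
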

\begin{proof}
By above, we have
    $$\omega^k_{\hspace{0.5em}j}=\mathbf{\Gamma}^k_{ij}e^i=\frac{1}{2}(c^k_{ij}-c^i_{jk}+c^j_{ki})e^i.$$
Now, using the definition of the curvature 2-form to write the entries explicitly, we have
\begin{eqnarray*}
    \Omega^k_{\hspace{0.5em}j}&=&d\omega^k_{\hspace{0.5em}j}+\omega^k_{\hspace{0.5em}l}\wedge\omega^l_{\hspace{0.5em}j}\\
    &=&\frac{1}{2}d[(c^k_{ij}-c^i_{jk}+c^j_{ki})e^i]+\frac{1}{4}[(c^k_{ml}-c^m_{lk}+c^l_{km})e^m]\wedge [(c^l_{pj}-c^p_{jl}+c^j_{lp})e^p]\\
    &=&\frac{1}{2}d(c^k_{ij}-c^i_{jk}+c^j_{ki})\wedge e^i+\frac{1}{2}(c^k_{ij}-c^i_{jk}+c^j_{ki})de^i\\
    \qquad &+&\frac{1}{4}(c^k_{ml}-c^m_{lk}+c^l_{km})(c^l_{pj}-c^p_{jl}+c^j_{lp})e^m\wedge e^p\\
    &=&\frac{1}{2}e_q(c^k_{ij}-c^i_{jk}+c^j_{ki})e^q\wedge e^i+\frac{1}{2}(c^k_{ij}-c^i_{jk}+c^j_{ki})de^i\\
    \qquad &+&\frac{1}{4}(c^k_{ml}-c^m_{lk}+c^l_{km})(c^l_{pj}-c^p_{jl}+c^j_{lp})e^m\wedge e^p,
\end{eqnarray*}
and the formula is proven.
\end{proof}
\begin{theorem} 
The formula for the scalar curvature $S$ of $M$ is
    $$S=2e_k(c^j_{kj})-c^k_{ik}c^j_{ij}-\frac{1}{2}c^i_{kj}c^k_{ij}-\frac{1}{4}c^i_{kj}c^i_{kj}.$$
\end{theorem}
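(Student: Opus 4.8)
The plan is to start from the trace formula $S=\sum_{j,k}\Omega^k_{\hspace{0.5em}j}(e_k,e_j)$ recorded above, substitute the expression for $\Omega^k_{\hspace{0.5em}j}$ supplied by Lemma 2.3, evaluate the resulting $2$-forms on the pair $(e_k,e_j)$, and sum over the two free indices $j,k$. Three elementary facts carry the computation. First, on the dual coframe one has $(e^a\wedge e^b)(e_k,e_j)=\delta^a_k\delta^b_j-\delta^a_j\delta^b_k$, so each wedge term collapses into Kronecker deltas that contract the summed indices. Second, because $e^i(e_a)$ is constant, the identity $d\alpha(X,Y)=X\alpha(Y)-Y\alpha(X)-\alpha([X,Y])$ gives $de^i(e_k,e_j)=-e^i([e_k,e_j])=-c^i_{kj}$, which turns the middle summand of Lemma 2.3 into a purely algebraic expression in the structure functions. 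Third, the antisymmetry $c^k_{ij}=-c^k_{ji}$ (hence $c^k_{ii}=0$) is the mechanism that lets the many raw terms coalesce.

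I would then treat $S$ as the sum of the three contributions coming from the three summands of $\Omega^k_{\hspace{0.5em}j}$. Applying $(e^q\wedge e^i)(e_k,e_j)=\delta^q_k\delta^i_j-\delta^q_j\delta^i_k$ to the first (derivative) summand and simplifying combinations such as $-c^j_{jk}+c^j_{kj}=2c^j_{kj}$, the two Kronecker contributions each reduce to $e_k(c^j_{kj})$ after relabeling, producing the lone derivative term $2e_k(c^j_{kj})$. Feeding $de^i(e_k,e_j)=-c^i_{kj}$ into the second summand expands it into three quadratic pieces; reordering products and using antisymmetry in the lower indices identifies two of them with $-\tfrac12 c^i_{kj}c^k_{ij}$ and the third with $-\tfrac12 c^i_{kj}c^i_{kj}$, so this summand alone contributes $-c^i_{kj}c^k_{ij}-\tfrac12 c^i_{kj}c^i_{kj}$.

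The decisive step is the third, purely quadratic summand. Evaluating the product of the two connection-form factors against $\delta^m_k\delta^p_j-\delta^m_j\delta^p_k$ splits it into two groups. The group from $\delta^m_k\delta^p_j$ reduces, via antisymmetry, to a product of the ``trace'' quantities $\sum_k c^k_{lk}$ and $\sum_j c^j_{lj}$ and supplies $-c^k_{ik}c^j_{ij}$. The group from $\delta^m_j\delta^p_k$ is the delicate one: writing its two factors as $P-Q+R$ and $R-P+Q$ with $P=c^k_{jl}$, $Q=c^j_{lk}$, $R=c^l_{kj}$, the cross terms cancel in pairs, leaving $R^2-P^2-Q^2+2PQ$, and after relabeling dummy indices each of these matches $\pm C$ or $\pm B$, where $C=c^i_{kj}c^i_{kj}$ and $B=c^i_{kj}c^k_{ij}$. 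I expect this matching, not the geometry, to be the main obstacle: keeping the repeated relabelings straight and correctly tracking the signs produced by antisymmetry is where an error is most likely to creep in. Adding the three contributions and cancelling the over-counted $B$ and $C$ terms then yields $S=2e_k(c^j_{kj})-c^k_{ik}c^j_{ij}-\tfrac12 c^i_{kj}c^k_{ij}-\tfrac14 c^i_{kj}c^i_{kj}$, as claimed.
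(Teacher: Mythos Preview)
Your proposal is correct and follows essentially the same route as the paper: both start from $S=\Omega^k_{\hspace{0.5em}j}(e_k,e_j)$, substitute the Lemma, evaluate wedges via Kronecker deltas and $de^i(e_k,e_j)=-c^i_{kj}$, and then reduce everything with the antisymmetry $c^k_{ij}=-c^k_{ji}$. Your bookkeeping is in fact a bit tidier than the paper's—separating the three summands and introducing $P,Q,R$ for the $\omega\wedge\omega$ piece makes the cancellations $(P-Q+R)(R-P+Q)=R^2-P^2-Q^2+2PQ$ transparent, whereas the paper carries all the raw terms through a longer chain of relabelings—but the underlying argument is the same.
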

\begin{proof}
\indent By the remarks before Notation 2.2, we have  $S=\Omega^k_{\hspace{0.5em}j} (e_k,e_j)$. Hence, plugging the formula for $\Omega$ as in Lemma 2.2, we have
\begin{eqnarray*}
    S&=&\Omega^k_{\hspace{0.5em}j}(e_k,e_j)\\
    &=&\frac{1}{2}e_q(c^k_{ij}-c^i_{jk}+c^j_{ki})e^q\wedge e^i(e_k,e_j)+\frac{1}{2}(c^k_{ij}-c^i_{jk}+c^j_{ki})de^i(e_k,e_j)\\
    \qquad &+&\frac{1}{4}(c^k_{ml}-c^m_{lk}+c^l_{km})(c^l_{pj}-c^p_{jl}+c^j_{lp})e^m\wedge e^p(e_k,e_j)\\
    &=&\frac{1}{2}e_q(c^k_{ij}-c^i_{jk}+c^j_{ki})(\delta_{qk}\delta_{ij}-\delta_{qj}\delta_{ik})+\frac{1}{2}(c^k_{ij}-c^i_{jk}+c^j_{ki})(-c^i_{kj})\\
    \qquad &+&\frac{1}{4}(c^k_{ml}-c^m_{lk}+c^l_{km})(c^l_{pj}-c^p_{jl}+c^j_{lp})(\delta_{mk}\delta_{pj}-\delta_{mj}\delta_{pk})\\
    &=&\frac{1}{2}e_k(c^k_{jj}-c^j_{jk}+c^j_{kj})-\frac{1}{2}e_j(c^k_{kj}-c^k_{jk}+c^j_{kk})-c^i_{kj}(c^k_{ij}-c^i_{jk}+c^j_{ki})\\
    \qquad &+&\frac{1}{4}(c^k_{kl}-c^k_{lk}+c^l_{kk})(c^l_{jj}-c^j_{jl}+c^j_{lj})-\frac{1}{4}(c^k_{jl}-c^j_{lk}+c^l_{kj})(c^l_{kj}-c^k_{jl}+c^j_{lk}),
\end{eqnarray*}
whereby we used various identities from exterior calculus and the Kronecker delta symbol to simplify. Continue using the antisymmetry of the structure functions and re-arranging the terms, we get
\begin{eqnarray*}
    S&=&\frac{1}{2}e_k(c^j_{kj}+c^j_{kj})-\frac{1}{2}e_j(c^k_{kj}+c^k_{kj})-\frac{1}{2}c^i_{kj}(c^k_{ij}-c^i_{jk}+c^j_{ki})\\
    \qquad &+&\frac{1}{4}(c^k_{kl}+c^k_{kl})(-c^j_{jl}-c^j_{jl})-\frac{1}{4}(c^k_{jl}-c^j_{lk}+c^l_{kj})(c^l_{kj}-c^k_{jl}+c^j_{lk})\\
    &=&e_k(c^j_{kj})-e_j(c^k_{kj})-\frac{1}{2}c^i_{kj}(c^k_{ij}-c^i_{jk}+c^j_{ki})\\
    \qquad& -&c^k_{kl}c^j_{jl}-\frac{1}{4}(c^k_{jl}c^l_{kj}-c^k_{jl}c^k_{jl}+c^k_{jl}c^j_{lk}-c^j_{lk}c^l_{kj}+c^j_{lk}c^k_{jl}-c^j_{lk}c^j_{lk})\\
    \qquad &-&\frac{1}{4}(c^l_{kj}c^l_{kj}-c^l_{kj}c^k_{jl}+c^l_{kj}c^j_{lk})\\
    &=&e_k(c^j_{kj})-e_j(c^k_{kj})-\frac{1}{2}c^i_{kj}(c^k_{ij}-c^i_{jk}+c^j_{ki})\\
    \qquad& -&c^k_{kl}c^j_{jl}+\frac{1}{4}(c^k_{jl}c^k_{jl}+c^j_{lk}c^j_{lk}-c^l_{kj}c^l_{kj})+c^k_{lj}c^j_{lk}\\
    &=&e_k(c^j_{kj})-e_j(c^k_{kj})+\frac{1}{4}(c^k_{jl}c^k_{jl}+c^j_{lk}c^j_{lk}-c^l_{kj}c^l_{kj})\\
    \qquad &+&\frac{1}{2}c^k_{lj}c^j_{lk}-c^k_{kl}c^j_{jl}-\frac{1}{2}c^i_{kj}c^k_{ij}-\frac{1}{2}c^i_{kj}c^i_{kj}-\frac{1}{2}c^i_{jk}c^j_{ik}\\
    &=&e_k(c^j_{kj})-e_j(c^k_{kj})+\frac{1}{4}c^k_{jl}c^k_{jl}+\frac{1}{4}c^j_{lk}c^j_{lk}-\frac{1}{4}c^l_{kj}c^l_{kj}\\
    \qquad &+&\frac{1}{2}c^k_{lj}c^j_{lk}-\frac{1}{2}c^i_{jk}c^j_{ik}-c^k_{kl}c^j_{jl}-\frac{1}{2}c^i_{kj}c^k_{ij}-\frac{1}{2}c^i_{kj}c^i_{kj}.
\end{eqnarray*}
\indent Since $i,j,k,l$ are dummy indices, we can change the index in each term and simplify more. After a series of index changing and simplifying, we get
\begin{eqnarray*}
    S&=&2e_k(c^j_{kj})+\frac{1}{4}c^i_{kj}c^i_{kj}-c^k_{kl}c^j_{jl}-\frac{1}{2}c^i_{kj}c^k_{ij}-\frac{1}{2}c^i_{kj}c^i_{kj}\\
    &=&2e_k(c^j_{kj})-c^k_{kl}c^j_{jl}-\frac{1}{2}c^i_{kj}c^k_{ij}-\frac{1}{4}c^i_{kj}c^i_{kj}.
\end{eqnarray*}
\indent Finally, changing $l$ to $i$ in the second term, we arrive at the desired formula for $S$. 
\end{proof}
\begin{remark}
The formula in Theorem 2.3 is consistent with the known formula of the scalar curvature of a Lie group (see Section 4.1 below and \cite{MR425012}, Lemma 1.1).
\end{remark}
\indent As it turns out, this formula will be very important in our later sections to investigate the case of positive scalar curvature metrics on a Riemannian manifold. Also, in order to be clear about the terms, we may put back the summation notation and get
\begin{eqnarray}
S=\sum\limits_{k,j=1}^n 2e_k(c^j_{kj})+\sum\limits_{i,j,k=1}^n \left[-c^k_{ik}c^j_{ij}- \frac{1}{2}c^i_{kj}c^k_{ij}-\frac{1}{4}(c^i_{kj})^2\right]
\end{eqnarray}

\section{The decomposition of the scalar curvature in terms of the sub-bundles}
With the formula for the scalar curvature in terms of the structure functions proven, in this section, we begin to investigate the situation laid out at the end of Section 1. But first, we will re-formulate the scenario we are investigating with some additional notation.\\
\indent We start with a $n$-dimensional Riemannian manifold $(M,g)$ and its tangent bundle $TM$, and suppose we can split $TM$ into two orthogonal sub-bundles $X$ and $Y$, with ranks $r$ and $s$ respectively. In other words, $TM=X\oplus Y$ and $r+s=n$. Declare $g'=g_X\oplus\left(\frac{1}{f^2}g_Y\right)$ to be a new metric on $M$, where $g_X$ is the metric restricted to $X$, $g_Y$ is the metric restricted to $Y$ and $f$ is a positive constant. Also, let $\beta=\{e_1,e_2,\cdots, e_{r},e_{r+1},\cdots, e_n\}$ be an adapted local orthonormal frame with respect to $g$. Denote $S$ the scalar curvature of $M$ with respect to $g'$, $S_1$ the scalar curvature of $M$ restricted to $X$ and $S_2$ the scalar curvature of $M$ restricted to $Y$ (both with respect to $g$).
\begin{notation}
In the calculations below, denote the indices corresponding to the first $r$-coordinates by lowercase Roman letters, the indices corresponding to the last $s$ coordinates by Greek letters, and the indices corresponding to generic coordinates by capital Roman letters.
\end{notation}
\indent In order to extract useful information to our analysis, we may want to find a relationship between the scalar curvatures $S\,,S_1\,,S_2$ and the constant $f$, so that, for instance, if we know that $S_2>0$, what can we conclude about the sign of $S$?\\
\indent First, by linear algebra, it can be proven that with the new metric $g'$ defined as above, if $\beta'=\{e'_i:1\leq i\leq n\}$ is an adapted orthonormal frame with respect to $g'$, then $e'_i=Pe_i$ for all $1\leq i\leq n$, where $P=I_{r\times r}\oplus \left(fI\right)_{s\times s}$. In other words, $e'_a=e_a$ for all $1\leq a\leq r$ and $e'_{\alpha}=fe_{\alpha}$ for all $r+1\leq \alpha\leq n$, using Notation 3.1.
\begin{lemma}
Let $c^P_{{MQ}}$ and $\widetilde{c^P_{MQ}}$ be the structure functions of $\beta$ and $\beta'$ respectively. If, in the form of $P$ as above, $f$ is a function that changes according to the location on $M$, then we have the following relationship, written using Notation 3.1:
\begin{eqnarray*}
    \widetilde{c^k_{ij}}&=&c^k_{ij},\\
    \widetilde{c^{\alpha}_{ij}}&=&\frac{1}{f}c^{\alpha}_{ij},\\
     \widetilde{c^{k}_{i\alpha}}&=&fc^k_{i\alpha},\\
       \widetilde{c^{\beta}_{i\beta}}&=&\frac{1}{f}e_i(f)+c^{\beta}_{i\beta},\\
    \widetilde{c^{\beta}_{i\alpha}}&=&c^{\beta}_{i\alpha}\, (\text{when $\beta\neq\alpha$}),\\
     \widetilde{c^k_{\alpha\beta}}&=&f^2c^k_{\alpha\beta},\\
       \widetilde{c^{\gamma}_{\gamma\beta}}&=&f c^{\gamma}_{\gamma\beta}-e_{\beta}(f)\,(\text{when $\gamma\neq\beta$}),\\
    \widetilde{c^{\gamma}_{\alpha\beta}}&=&fc^{\gamma}_{\alpha\beta}.
\end{eqnarray*}
\end{lemma}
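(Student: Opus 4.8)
The plan is to compute each of the three possible Lie brackets $[e'_I,e'_J]$ directly from the change-of-frame relations $e'_a=e_a$ (for $a$ in the $X$-range) and $e'_{\alpha}=fe_{\alpha}$ (for $\alpha$ in the $Y$-range), and then to read off the coefficients $\widetilde{c^P_{IJ}}$ by re-expanding the answer back in the frame $\beta'$. The only tool needed is the Leibniz rule for brackets of scaled vector fields,
$$[\phi V,\psi W]=\phi\psi[V,W]+\phi(V\psi)W-\psi(W\phi)V,$$
together with the inverse relations $e_a=e'_a$ and $e_{\alpha}=\frac{1}{f}e'_{\alpha}$, which convert any expression written in $\beta$ into one written in $\beta'$. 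The key organizing observation is that the eight listed identities are indexed precisely by which ranges the two lower indices occupy, so the whole lemma collapses to exactly three bracket computations.

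First I would treat the case where both lower indices lie in the $X$-range. Here $e'_i=e_i$ and $e'_j=e_j$, so $[e'_i,e'_j]=[e_i,e_j]=c^K_{ij}e_K$; splitting the sum over the generic index $K$ into its $X$-part and $Y$-part and applying $e_{\alpha}=\frac{1}{f}e'_{\alpha}$ gives at once $\widetilde{c^k_{ij}}=c^k_{ij}$ and $\widetilde{c^{\alpha}_{ij}}=\frac{1}{f}c^{\alpha}_{ij}$, the first two identities.

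Next I would compute the mixed bracket $[e'_i,e'_{\alpha}]=[e_i,fe_{\alpha}]$ with $i$ in the $X$-range and $\alpha$ in the $Y$-range. The Leibniz rule gives $f[e_i,e_{\alpha}]+(e_if)e_{\alpha}=fc^K_{i\alpha}e_K+(e_if)e_{\alpha}$, whose $X$-part yields $\widetilde{c^k_{i\alpha}}=fc^k_{i\alpha}$, while its $Y$-part contributes both $fc^{\beta}_{i\alpha}e_{\beta}=c^{\beta}_{i\alpha}e'_{\beta}$ and the derivative term $(e_if)e_{\alpha}=\frac{1}{f}(e_if)e'_{\alpha}$. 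This is the one place requiring care: the derivative term sits only in the slot $\beta=\alpha$, so one separates the diagonal case from the off-diagonal case, producing $\widetilde{c^{\beta}_{i\beta}}=\frac{1}{f}e_i(f)+c^{\beta}_{i\beta}$ and $\widetilde{c^{\beta}_{i\alpha}}=c^{\beta}_{i\alpha}$ for $\beta\neq\alpha$. The same bookkeeping recurs in the final bracket $[e'_{\alpha},e'_{\beta}]=[fe_{\alpha},fe_{\beta}]$, where the Leibniz rule gives $f^2[e_{\alpha},e_{\beta}]+f(e_{\alpha}f)e_{\beta}-f(e_{\beta}f)e_{\alpha}$; re-expanding produces $\widetilde{c^k_{\alpha\beta}}=f^2c^k_{\alpha\beta}$ from the $X$-part, and from the $Y$-part the two derivative terms attach to the slots $\gamma=\beta$ and $\gamma=\alpha$ respectively, yielding the diagonal identity $\widetilde{c^{\gamma}_{\gamma\beta}}=fc^{\gamma}_{\gamma\beta}-e_{\beta}(f)$ and the generic identity $\widetilde{c^{\gamma}_{\alpha\beta}}=fc^{\gamma}_{\alpha\beta}$.

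I would expect no genuine obstacle, since each identity follows from a one-line bracket computation; the only thing to watch is the Kronecker-delta bookkeeping that splits the diagonal superscript (matching one of the two lower $Y$-indices) from a generic superscript, as this is exactly what distinguishes the fourth identity from the fifth and the seventh from the eighth. A secondary check worth recording is the sign in the seventh identity: when the superscript $\gamma$ equals the first lower index $\alpha$, the surviving derivative term is $-(e_{\beta}f)e_{\alpha}$, and the factor $\frac{1}{f}$ from converting $e_{\alpha}$ to $e'_{\alpha}$ cancels the $f$ to leave precisely $-e_{\beta}(f)$.
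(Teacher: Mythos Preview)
Your proposal is correct and follows essentially the same approach as the paper: compute the brackets $[e'_I,e'_J]$ using $e'_a=e_a$, $e'_{\alpha}=fe_{\alpha}$ and the Leibniz rule, then re-expand in the frame $\beta'$ and match coefficients, taking care with the diagonal cases where a derivative term lands in one specific slot. The only difference is organizational---the paper enumerates eight cases indexed by the ranges of all three indices and then groups them in pairs, whereas you group by the three bracket types $(X,X)$, $(X,Y)$, $(Y,Y)$ up front and split each result by the range of the upper index; the computations themselves are identical.
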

\begin{proof}
By the remark above, we have the following change in the frame field vectors:
\begin{eqnarray*}
    e'_i&=&e_i\\
    e'_{\alpha}&=&fe_{\alpha}
\end{eqnarray*}
\indent There are 8 cases to consider, depending on the values of the indices of the structure functions. ($M,P,Q$ can be either between $1$ and $r$ or between $r+1$ and $n$). We will carry out the computations for the first 4 cases, and the remaining cases are similar.\\
\indent \textbf{Case 1}: If $1\leq M,Q,P\leq r$, then in this case, we may use Notation 3.1 to write $M=i\,,Q=j\,,P=k$. Hence, in this case, $[e'_i,e'_j]=\widetilde{c^k_{ij}}e'_k$, while $[e_i,e_j]=c^k_{ij}e_k$. However, since $1\leq i,j\leq r$, $[e'_i,e'_j]=[e_i,e_j]$ and $e'_k=e_k$, so 
\begin{eqnarray}
    \widetilde{c^k_{ij}}e_k&=&c^k_{ij}e_k.
\end{eqnarray}
\indent \textbf{Case 2}: If $1\leq M,Q\leq r,r+1\leq P\leq n$, then in this case, we may use Notation 3.1 to write $M=i\,,Q=j\,,P=\alpha$. Hence, in this case,  $[e'_i,e'_j]=\widetilde{c^{\alpha}_{ij}}e'_\alpha=\widetilde{c^{\alpha}_{ij}}fe_{\alpha}$. On the other hand, $[e_i,e_j]=c^{\alpha}_{ij}e_{\alpha}$. However, by the same reasoning as above, we get
\begin{eqnarray}
    f\widetilde{c^{\alpha}_{ij}}e_{\alpha}&=&c^{\alpha}_{ij}e_{\alpha}.
\end{eqnarray}
\indent \textbf{Case 3}: If $1\leq M,P\leq r,r+1\leq Q\leq n$ then in this case, we may use Notation 3.1 to write $M=i\,,Q=\alpha \,,P=k$. Hence, in this case, $[e'_i,e'_{\alpha}]=\widetilde{c^k_{i\alpha}}e'_k=\widetilde{c^k_{i\alpha}}e_k$. On the other hand,
\begin{eqnarray*}
    [e'_i,e'_{\alpha}]&=&[e_i,fe_{\alpha}]\\
    &=&e_i(fe_{\alpha})-fe_{\alpha}e_i\\
    &=&e_i(f)e_{\alpha}+fe_ie_{\alpha}-fe_{\alpha}e_i\\
    &=&e_i(f)e_{\alpha}+f[e_i,e_{\alpha}]\\
    &=&e_i(f)e_{\alpha}+fc^k_{i\alpha}e_k.
\end{eqnarray*}
\indent Hence,
\begin{eqnarray}
    \widetilde{c^k_{i\alpha}}e_k&=&e_i(f)e_{\alpha}+fc^k_{i\alpha}e_k.
\end{eqnarray}
\indent \textbf{Case 4}: If $1\leq M\leq r,r+1\leq Q,P\leq n$ then in this case, we may use Notation 3.1 to write $M=i\,,Q=\alpha \,,P=\beta$. Hence, in this case, $[e'_i,e'_{\alpha}]=\widetilde{c^{\beta}_{i\alpha}}e'_{\beta}=\widetilde{c^{\beta}_{i\alpha}}fe_{\beta}$. On the other hand,
\begin{eqnarray*}
    [e'_i,e'_{\alpha}]&=&[e_i,fe_{\alpha}]\\
    &=&e_i(fe_{\alpha})-fe_{\alpha}e_i\\
    &=&e_i(f)e_{\alpha}+fe_ie_{\alpha}-fe_{\alpha}e_i\\
    &=&e_i(f)e_{\alpha}+f[e_i,e_{\alpha}]\\
    &=&e_i(f)e_{\alpha}+fc^{\beta}_{i\alpha}e_{\beta}.
\end{eqnarray*}
\indent Hence,
\begin{eqnarray}
    \widetilde{c^{\beta}_{i\alpha}}fe_{\beta}&=&e_i(f)e_{\alpha}+fc^{\beta}_{i\alpha}e_{\beta}.
\end{eqnarray}
\indent Now, notice that in Case 1 and 2, the expansion of $[e'_i,e'_j]$ must be the same on both sides of the equality. Hence, we must have matching coefficients when adding (3.1) and (3.2). Therefore, matching the coefficients of $e_k$ and $e_{\alpha}$ respectively in (3.1) and (3.2) yields 
\begin{eqnarray}
    \widetilde{c^k_{ij}}&=&c^k_{ij}\\
    \widetilde{c^{\alpha}_{ij}}&=&\frac{1}{f}c^{\alpha}_{ij}.
\end{eqnarray}
\indent A similar argument as above applied to (3.3) and (3.4) yields
\begin{eqnarray*}
    \widetilde{c^{k}_{i\alpha}}&=&fc^k_{i\alpha}.
\end{eqnarray*}
\indent and another pair of equations. Notice that if $\alpha=\beta$, then (3.4) will yield $f\widetilde{c^{\beta}_{i\beta}}=e_i(f)+fc^{\beta}_{i\beta}$, while if $\beta\neq\alpha$, then the contribution of $e_{\alpha}$ is negligible, and so $f\widetilde{c^{\beta}_{i\alpha}}=fc^{\beta}_{i\alpha}$. Hence, combining the cases, we have
\begin{eqnarray*}
    \widetilde{c^{\beta}_{i\beta}}&=&\frac{1}{f}e_i(f)+c^{\beta}_{i\beta},\\
    \widetilde{c^{\beta}_{i\alpha}}&=&c^{\beta}_{i\alpha} (\text{when $\beta\neq\alpha$}).
\end{eqnarray*}
\indent The other cases are similar in the calculations. Combining all the cases, Lemma 3.1 is proven.
\end{proof}
\indent In the case that $f$ is a constant function, an immediate corollary to Lemma 3.1 is
\begin{corollary}
\indent If $f$ is a constant, then the structure functions will undergo the following changes.
\begin{eqnarray*}
    \widetilde{c^k_{ij}}&=&c^k_{ij}\\
    \widetilde{c^{\alpha}_{ij}}&=&\frac{1}{f}c^{\alpha}_{ij}\\
     \widetilde{c^{k}_{i\alpha}}&=&fc^k_{i\alpha}\\
       \widetilde{c^{\beta}_{i\beta}}&=&c^{\beta}_{i\beta}\\
    \widetilde{c^{\beta}_{i\alpha}}&=&c^{\beta}_{i\alpha}\,(\text{when $\beta\neq\alpha$})\\
     \widetilde{c^k_{\alpha\beta}}&=&f^2c^k_{\alpha\beta}\\
       \widetilde{c^{\gamma}_{\gamma\beta}}&=&f c^{\gamma}_{\gamma\beta}\,(\text{when $\gamma\neq\beta$})\\
    \widetilde{c^{\gamma}_{\alpha\beta}}&=&fc^{\gamma}_{\alpha\beta}
\end{eqnarray*}
\end{corollary}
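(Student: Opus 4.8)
The plan is to obtain this corollary as a direct specialization of Lemma 3.1 to the case where the positive function $f$ is in fact constant. The governing observation is that the eight general transformation rules established in Lemma 3.1 reduce to the eight rules stated here precisely because the only extra terms appearing in the general case are directional derivatives of $f$, and these vanish identically once $f$ is constant.

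First I would place the two lists side by side and isolate exactly which formulas in Lemma 3.1 carry a derivative of $f$. A quick inspection shows that only two of the eight do: the rule $\widetilde{c^{\beta}_{i\beta}}=\frac{1}{f}e_i(f)+c^{\beta}_{i\beta}$ and the rule $\widetilde{c^{\gamma}_{\gamma\beta}}=f\,c^{\gamma}_{\gamma\beta}-e_{\beta}(f)$. The remaining six rules — those for $\widetilde{c^k_{ij}}$, $\widetilde{c^{\alpha}_{ij}}$, $\widetilde{c^k_{i\alpha}}$, $\widetilde{c^{\beta}_{i\alpha}}$ with $\beta\neq\alpha$, $\widetilde{c^k_{\alpha\beta}}$, and $\widetilde{c^{\gamma}_{\alpha\beta}}$ — contain no derivative of $f$ and therefore transfer verbatim to the constant-$f$ setting.

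Next I would invoke the defining property of a constant function: since each frame vector $e_i$ and $e_{\beta}$ acts as a first-order differential operator, applying it to the constant $f$ gives $e_i(f)=0$ and $e_{\beta}(f)=0$. Substituting these two vanishings into the two derivative-bearing rules collapses them to $\widetilde{c^{\beta}_{i\beta}}=c^{\beta}_{i\beta}$ and $\widetilde{c^{\gamma}_{\gamma\beta}}=f\,c^{\gamma}_{\gamma\beta}$, which are exactly the corresponding entries in the statement. Assembling these with the six unchanged rules then produces the full list claimed.

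There is essentially no genuine obstacle here, since the argument is purely a substitution; the only point deserving a moment of attention is the bookkeeping, namely confirming that none of the six ``unchanged'' rules conceals a hidden dependence on a derivative of $f$, so that the two-term reduction above is indeed exhaustive. Once that check is made, the corollary follows at once from Lemma 3.1.
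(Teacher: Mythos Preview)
Your proposal is correct and mirrors the paper's approach exactly: the paper states this result as an immediate corollary to the preceding lemma with no further argument, and your write-up simply makes explicit the one-line substitution (setting $e_i(f)=e_\beta(f)=0$) that underlies the word ``immediate.''
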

\indent Having done the preparatory work, we will prove the most important theorem of this paper.
\begin{theorem}
In the set up as in the introductory paragraphs to this section, we have the following relationship, where the repeated indices are summed over their appropriate ranges:
\begin{eqnarray}
S&=&S_1+f^2S_2-\frac{1}{4}f^4c^i_{\gamma\beta}c^i_{\gamma\beta}\nonumber\\
    \qquad&+&\left(2e_{\gamma}(c^j_{\gamma j})-c^k_{\alpha k}c^j_{\alpha j}-2c^k_{\alpha k}c^{\beta}_{\alpha\beta}-\frac{1}{2}c^k_{i\beta}c^i_{k\beta}-c^i_{\gamma\beta}c^{\gamma}_{i\beta}-\frac{1}{2}c^i_{\gamma j}c^i_{\gamma j}\right)f^2\nonumber\\
    \qquad&+&\left(2e_k(c^{\beta}_{k\beta})-c^{\gamma}_{i\gamma}c^{\beta}_{i\beta}-2c^{\gamma}_{i\gamma}c^j_{ij}-\frac{1}{2}c^{\alpha}_{\gamma j}c^{\gamma}_{\alpha j}-c^i_{\gamma j}c^{\gamma}_{ij}-\frac{1}{2}c^{\alpha}_{k\beta}c^{\alpha}_{k\beta}\right)\nonumber\\
    \qquad&-&\frac{1}{4}\left(\frac{1}{f^2}\right)(c^{\alpha}_{kj}c^{\alpha}_{kj}).
\end{eqnarray}
\end{theorem}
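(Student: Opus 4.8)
The plan is to apply Theorem~2.3 directly to the metric $g'$, using the adapted $g'$-orthonormal frame $\beta'$ and its structure functions $\widetilde{c}$. That is, I would start from
\[
S = 2e'_K\!\left(\widetilde{c^J_{KJ}}\right) - \widetilde{c^K_{IK}}\,\widetilde{c^J_{IJ}} - \frac{1}{2}\widetilde{c^I_{KJ}}\,\widetilde{c^K_{IJ}} - \frac{1}{4}\widetilde{c^I_{KJ}}\,\widetilde{c^I_{KJ}},
\]
where the capital indices $I,J,K$ run over the full range $1,\dots,n$. The entire proof is then the substitution of the transformation rules of Corollary~3.3, followed by splitting every capital-index sum into its Roman part ($1,\dots,r$) and its Greek part ($r+1,\dots,n$) and collecting by powers of $f$.

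Before grinding, I would record the single observation that makes the bookkeeping manageable: because $f$ is constant, all eight rules in Corollary~3.3 collapse into one uniform scaling law, $\widetilde{c}=f^{\,L-U}c$, where $L$ is the number of lower Greek indices and $U$ the number of upper Greek indices of the structure function in question; moreover $e'_K=e_K$ for $K$ Roman and $e'_K=fe_K$ for $K$ Greek, and since $f$ is constant this factor simply pulls out in front of the derivative of the scaled $c$. With this rule, each quadratic term above expands, upon splitting $I,J,K$ into Roman/Greek, into at most $2^3=8$ sub-terms, each carrying an explicit power of $f$ that can be read off immediately from the counts $L$ and $U$.

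The key structural step is to identify the two extreme cases. The purely-Roman part (all of $I,J,K$ in $1,\dots,r$) carries $f^0$ and reassembles verbatim into the formula of Theorem~2.3 with indices restricted to $X$, i.e.\ $S_1$. The purely-Greek part (all indices in $r+1,\dots,n$) has every structure-function factor scaled by $f$ and every derivative prefixed by $f$, so it reassembles into $f^2S_2$. The term $-\frac14 f^4 c^i_{\gamma\beta}c^i_{\gamma\beta}$ is the piece of $-\frac14\widetilde{c^I_{KJ}}\widetilde{c^I_{KJ}}$ with one Roman upper index and two Greek lower indices (so $L-U=2$, giving $f^2$ per factor and hence $f^4$), while the term $-\frac14 f^{-2} c^{\alpha}_{kj}c^{\alpha}_{kj}$ is its mirror piece with one Greek upper index and two Roman lower indices. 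Everything else is a genuine cross-term, and these I would sort into the $f^2$-coefficient group and the $f^0$-coefficient group that appear in the statement.

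The main obstacle is the exhaustive and error-prone combinatorics of the cross-terms: the two quadratic expressions $\widetilde{c^I_{KJ}}\widetilde{c^K_{IJ}}$ and $\widetilde{c^I_{KJ}}\widetilde{c^I_{KJ}}$ are not symmetric in the roles of $I,J,K$, so splitting them produces several distinct sub-terms sharing the same power of $f$ that must be fused using the antisymmetry $c^K_{IJ}=-c^K_{JI}$ (hence $c^K_{II}=0$) together with relabelling of the summed indices. Obtaining the precise coefficients $-\frac12$, $-1$, and $-2$ inside the two parenthetical groups is exactly the payoff of carefully tracking which sub-terms coincide after relabelling; the derivative term $2e'_K(\widetilde{c^J_{KJ}})$ must likewise be split, contributing $2e_k(c^{\beta}_{k\beta})$ to the $f^0$ group and $2e_{\gamma}(c^j_{\gamma j})f^2$ to the $f^2$ group. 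Once the cross-terms are collected by power of $f$ and the extreme cases are recognized as $S_1$, $f^2S_2$, and the two pure $f^{\pm}$ pieces, assembling the groups yields the stated identity~(3.9).
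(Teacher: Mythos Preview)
Your plan is correct and is essentially the paper's own proof: apply Theorem~2.3 to the $g'$-frame, split every capital index into its Roman and Greek parts, use Corollary~3.3 to replace the $\widetilde{c}$'s by powers of $f$ times the original $c$'s, and then recognise the purely-Roman block as $S_1$, the purely-Greek block as $f^2S_2$, and sort the mixed blocks by power of $f$. Your uniform rule $\widetilde{c}=f^{\,L-U}c$ (with $e'_\gamma=f e_\gamma$) is a tidy repackaging of Corollary~3.3 that the paper does not state explicitly, but the underlying computation and the final regrouping are identical.
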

\begin{proof}
Notice that by the formula at the end of the last section, we have
\begin{eqnarray*}
    S&=&\sum\limits_{J,K=1}^n 2e_K(c^J_{KJ})+\sum\limits_{I,J,K=1}^n \left[-c^K_{IK}c^J_{IJ}-\frac{1}{2}c^I_{KJ}c^K_{IJ}-\frac{1}{4}(c^I_{KJ})^2\right].
\end{eqnarray*}
\indent Using Notation 3.1, we can decompose $S$ as
\begin{eqnarray*}
    S&=&\sum\limits_{j,k=1}^r 2e_k(c^j_{kj})+\sum\limits_{j,k=1}^r\left[-c^k_{ik}c^j_{ij}-\frac{1}{2}c^i_{kj}c^k_{ij}-\frac{1}{4}(c^i_{kj})^2\right]\\
    \qquad&+&\sum\limits_{\beta,\gamma=r+1}^{r+s} 2e_{\gamma}(c^{\beta}_{\gamma\beta})+\sum\limits_{\alpha,\beta,\gamma=r+1}^{r+s}\left[-c^{\gamma}_{\alpha\gamma}c^{\beta}_{\alpha\beta}-\frac{1}{2}c^{\alpha}_{\gamma\beta}c^{\gamma}_{\alpha\beta}-\frac{1}{4}(c^{\alpha}_{\gamma\beta})^2\right]\\
    \qquad&+&\sum\limits_{i=1}^r\sum\limits_{\gamma,\beta=r+1}^{r+s}\left[-c^{\gamma}_{i\gamma}c^{\beta}_{i\beta}-\frac{1}{2}c^i_{\gamma\beta}c^{\gamma}_{i\beta}-\frac{1}{4}(c^i_{\gamma\beta})^2\right]\\
    \qquad&+&\sum\limits_{j=1}^r\sum\limits_{\gamma=r+1}^{r+s}2e_{\gamma}(c^j_{\gamma j})+\sum\limits_{i,j=1}^r\sum\limits_{\gamma=r+1}^{r+s}\left[-c^{\gamma}_{i\gamma}c^j_{ij}-\frac{1}{2}c^i_{\gamma j }c^{\gamma}_{ij}-\frac{1}{4}(c^i_{\gamma j})^2\right]\\
    \qquad&+&\sum\limits_{k=1}^r\sum\limits_{\beta=r+1}^{r+s}2e_k(c^{\beta}_{k\beta})+\sum\limits_{i,k=1}^r\sum\limits_{\beta=r+1}^{r+s}\left[c^k_{ik}c^{\beta}_{i\beta}-\frac{1}{2}c^i_{k\beta}c^k_{i\beta}-\frac{1}{4}(c^i_{k\beta})^2\right]\\
    \qquad&+&\sum\limits_{j=1}^r\sum\limits_{\alpha,\gamma=r+1}^{r+s}\left[-c^\gamma_{\alpha\gamma}c^j_{\alpha j}-\frac{1}{2}c^{\alpha}_{\gamma j}c^{\gamma}_{\alpha j}-\frac{1}{4}(c^{\alpha}_{\gamma j})^2\right]\\
    \qquad&+&\sum\limits_{j,k=1}^r\sum\limits_{\alpha=r+1}^{r+s}\left[-c^k_{\alpha k}c^j_{\alpha j}-\frac{1}{2}c^{\alpha}_{kj}c^k_{\alpha j}-\frac{1}{4}(c^{\alpha}_{kj})^2\right]\\
    \qquad&+&\sum\limits_{k=1}^r\sum\limits_{\alpha,\beta=r+1}^{r+s}\left[-c^k_{\alpha k}c^{\beta}_{\alpha\beta}-\frac{1}{2}c^{\alpha}_{k\beta}c^k_{\alpha\beta}-\frac{1}{4}(c^{\alpha}_{k\beta})^2\right].
\end{eqnarray*}
\indent Taking a closer look at the terms containing both the Greek and Roman indices above, which we will call $Q$, we realize that we can simplify them. Specifically, rearranging and re-indexing the Roman indices with the Roman indices and the Greek indices with the Greek indices, we eventually get
\begin{eqnarray*}
    Q&=&2e_{\gamma}(c^j_{\gamma j})+2e_k(c^{\beta}_{k\beta})\\
    \qquad &-&c^{\gamma}_{i\gamma}c^{\beta}_{i\beta}-c^k_{\alpha k}c^j_{\alpha j}-2c^{\gamma}_{i\gamma}c^j_{ij}-2c^k_{\alpha k}c^{\beta}_{\alpha\beta}\\
    \qquad &-&\frac{1}{2}c^i_{k\beta}c^k_{i\beta}-\frac{1}{2}c^{\alpha}_{\gamma j}c^{\gamma}_{\alpha j}-c^i_{\gamma\beta}c^{\gamma}_{i\beta}-c^i_{\gamma j }c^{\gamma}_{ij}\\
    \qquad&-&\frac{1}{4}c^i_{\gamma\beta}c^i_{\gamma\beta}-\frac{1}{4}c^{\alpha}_{kj}c^{\alpha}_{kj}-\frac{1}{2}c^i_{\gamma j}c^i_{\gamma j}-\frac{1}{2}c^{\alpha}_{k\beta}c^{\alpha}_{k\beta},
\end{eqnarray*}
and so, the formula for the scalar curvature $S_0$ in terms of the components, after a few rearrangements, will be
\begin{eqnarray*}
    S_0&=&2e_k(c^j_{kj})+2e_{\gamma}(c^{\beta}_{\gamma\beta})+2e_{\gamma}(c^j_{\gamma j})+2e_k(c^{\beta}_{k\beta})\\
    &-&c^k_{ik}c^j_{ij}-\frac{1}{2}c^i_{kj}c^k_{ij}-\frac{1}{4}c^i_{kj}c^i_{kj}\\
    &-&c^{\gamma}_{\alpha\gamma}c^{\beta}_{\alpha\beta}-\frac{1}{2}c^{\alpha}_{\gamma\beta}c^{\gamma}_{\alpha\beta}-\frac{1}{4}c^{\alpha}_{\gamma\beta}c^{\alpha}_{\gamma\beta}\\
     \qquad &-&c^{\gamma}_{i\gamma}c^{\beta}_{i\beta}-c^k_{\alpha k}c^j_{\alpha j}-2c^{\gamma}_{i\gamma}c^j_{ij}-2c^k_{\alpha k}c^{\beta}_{\alpha\beta}\\
    \qquad &-&\frac{1}{2}c^i_{k\beta}c^k_{i\beta}-\frac{1}{2}c^{\alpha}_{\gamma j}c^{\gamma}_{\alpha j}-c^i_{\gamma\beta}c^{\gamma}_{i\beta}-c^i_{\gamma j }c^{\gamma}_{ij}\\
    \qquad&-&\frac{1}{4}c^i_{\gamma\beta}c^i_{\gamma\beta}-\frac{1}{4}c^{\alpha}_{kj}c^{\alpha}_{kj}-\frac{1}{2}c^i_{\gamma j}c^i_{\gamma j}-\frac{1}{2}c^{\alpha}_{k\beta}c^{\alpha}_{k\beta}.
\end{eqnarray*}
\indent Now, we will introduce the change in the metric to the scalar curvature. Notice that the new scalar curvature $S$ of $M$ will have formula
\begin{eqnarray*}
    S&=&2\widetilde{e_k}(\widetilde{c^j_{kj}})+2\widetilde{e_{\gamma}}(\widetilde{c^{\beta}_{\gamma\beta}})+2\widetilde{e_{\gamma}}(\widetilde{c^j_{\gamma j}})+2\widetilde{e_k}(\widetilde{c^{\beta}_{k\beta}})\\
    &-&\widetilde{c^k_{ik}}\widetilde{c^j_{ij}}-\frac{1}{2}\widetilde{c^i_{kj}}\widetilde{c^k_{ij}}-\frac{1}{2}\widetilde{c^i_{kj}}\widetilde{c^i_{kj}}\\
    &-&\widetilde{c^{\gamma}_{\alpha\gamma}}\widetilde{c^{\beta}_{\alpha\beta}}-\frac{1}{2}\widetilde{c^{\alpha}_{\gamma\beta}}\widetilde{c^{\gamma}_{\alpha\beta}}-\frac{1}{4}(\widetilde{c^{\alpha}_{\gamma\beta}})^2\\
     \qquad &-&\widetilde{c^{\gamma}_{i\gamma}}\widetilde{c^{\beta}_{i\beta}}-\widetilde{c^k_{\alpha k}}\widetilde{c^j_{\alpha j}}-2\widetilde{c^{\gamma}_{i\gamma}}\widetilde{c^j_{ij}}-2\widetilde{c^k_{\alpha k}}\widetilde{c^{\beta}_{\alpha\beta}}\\
    \qquad &-&\frac{1}{2}\widetilde{c^i_{k\beta}}\widetilde{c^k_{i\beta}}-\frac{1}{2}\widetilde{c^{\alpha}_{\gamma j}}\widetilde{c^{\gamma}_{\alpha j}}-\widetilde{c^i_{\gamma\beta}}\widetilde{c^{\gamma}_{i\beta}}-\widetilde{c^i_{\gamma j }}\widetilde{c^{\gamma}_{ij}}\\
    \qquad&-&\frac{1}{4}\widetilde{c^i_{\gamma\beta}}\widetilde{c^i_{\gamma\beta}}-\frac{1}{4}\widetilde{c^{\alpha}_{kj}}\widetilde{c^{\alpha}_{kj}}-\frac{1}{2}\widetilde{c^i_{\gamma j}}\widetilde{c^i_{\gamma j}}-\frac{1}{2}\widetilde{c^{\alpha}_{k\beta}}\widetilde{c^{\alpha}_{k\beta}}.
\end{eqnarray*}
\indent Using Corollary 3.3 to write the new structure functions back in terms of the old ones and rearranging, we get
\begin{eqnarray*}
    S&=&2e_k(c^k_{kj})-c^k_{ik}c^j_{ij}-\frac{1}{2}c^i_{kj}c^k_{ij}-\frac{1}{4}c^i_{kj}c^i_{kj}\\
    \qquad&+&2f^2e_{\gamma}(c^{\beta}_{\gamma\beta})-f^2c^{\gamma}_{\alpha\gamma}c^{\beta}_{\alpha\beta}-\frac{1}{2}f^2c^{\alpha}_{\gamma\beta}c^{\gamma}_{\alpha\beta}-\frac{1}{4}f^2c^{\alpha}_{\gamma\beta}c^{\alpha}_{\gamma\beta}\\
    \qquad&-&\frac{1}{4}f^4c^i_{\gamma\beta}c^i_{\gamma\beta}\\
    \qquad&+&\left(2e_{\gamma}(c^j_{\gamma j})-c^k_{\alpha k}c^j_{\alpha j}-2c^k_{\alpha k}c^{\beta}_{\alpha\beta}-\frac{1}{2}c^k_{i\beta}c^i_{k\beta}-c^i_{\gamma\beta}c^{\gamma}_{i\beta}-\frac{1}{2}c^i_{\gamma j}c^i_{\gamma j}\right)f^2\\
    \qquad&+&\left(2e_k(c^{\beta}_{k\beta})-c^{\gamma}_{i\gamma}c^{\beta}_{i\beta}-2c^{\gamma}_{i\gamma}c^j_{ij}-\frac{1}{2}c^{\alpha}_{\gamma j}c^{\gamma}_{\alpha j}-c^i_{\gamma j}c^{\gamma}_{ij}-\frac{1}{2}c^{\alpha}_{k\beta}c^{\alpha}_{k\beta}\right)\\
    \qquad&-&\frac{1}{4}\left(\frac{1}{f^2}\right)c^{\alpha}_{kj}c^{\alpha}_{kj}\\
    &=&S_1+f^2S_2-\frac{1}{4}f^4c^i_{\gamma\beta}c^i_{\gamma\beta}\\
    \qquad&+&\left(2e_{\gamma}(c^j_{\gamma j})-c^k_{\alpha k}c^j_{\alpha j}-2c^k_{\alpha k}c^{\beta}_{\alpha\beta}-\frac{1}{2}c^k_{i\beta}c^i_{k\beta}-c^i_{\gamma\beta}c^{\gamma}_{i\beta}-\frac{1}{2}c^i_{\gamma j}c^i_{\gamma j}\right)f^2\\
    \qquad&+&\left(2e_k(c^{\beta}_{k\beta})-c^{\gamma}_{i\gamma}c^{\beta}_{i\beta}-2c^{\gamma}_{i\gamma}c^j_{ij}-\frac{1}{2}c^{\alpha}_{\gamma j}c^{\gamma}_{\alpha j}-c^i_{\gamma j}c^{\gamma}_{ij}-\frac{1}{2}c^{\alpha}_{k\beta}c^{\alpha}_{k\beta}\right)\\
    \qquad&-&\frac{1}{4}\left(\frac{1}{f^2}\right)c^{\alpha}_{kj}c^{\alpha}_{kj}.
\end{eqnarray*}
\indent Therefore, Theorem 3.4 is proven.
\end{proof}

\section{Special cases of the formula}
In this section, we will use formula (3.7) to analyze the condition on a Riemannian manifold and its sub-bundles as well as the constant $f$ in the formula of the new metric $g'$ so that $g'$ will be a PSC metric on our manifold. We will also provide special cases to Formula (3.7) that will help significantly with calculations.
\subsection{Simplifications of Theorem 3.4}
First, if the manifold $M$ happens to be a Lie group with a bi-invariant metric, then the structure functions now become constant, and Formula (2.1) becomes
\begin{eqnarray}
    S=\sum\limits_{i,j,k=1}^n \left[-c^k_{ik}c^j_{ij}- \frac{1}{2}c^i_{kj}c^k_{ij}-\frac{1}{4}(c^i_{kj})^2\right]
\end{eqnarray}
\indent In terms of our formula in Theorem 3.4, the resulting formula is
\begin{eqnarray*}
S&=&S_1+f^2S_2-\frac{1}{4}f^4c^i_{\gamma\beta}c^i_{\gamma\beta}\nonumber\\
    \qquad&+&\left(-c^k_{\alpha k}c^j_{\alpha j}-\frac{1}{2}c^k_{i\beta}c^i_{k\beta}-\frac{1}{2}c^i_{\gamma j}c^i_{\gamma j}\right)f^2\nonumber\\
    \qquad&+&\left(-c^{\gamma}_{i\gamma}c^{\beta}_{i\beta}-2c^{\gamma}_{i\gamma}c^j_{ij}-\frac{1}{2}c^{\alpha}_{\gamma j}c^{\gamma}_{\alpha j}-c^i_{\gamma j}c^{\gamma}_{ij}-\frac{1}{2}c^{\alpha}_{k\beta}c^{\alpha}_{k\beta}\right)\nonumber\\
    \qquad&-&\frac{1}{4}\left(\frac{1}{f^2}\right)(c^{\alpha}_{kj}c^{\alpha}_{kj}).
\end{eqnarray*}
\indent It should be noted that it is known (for instance, see \cite{MR425012}, Lemma 1.1) that in the case of a Lie group, the scalar curvature can be calculated via the structure constants of the underlying Lie algebra, so we can think of Formula 2.1 as a generalization of this case to arbitrary Riemannian manifolds.\\
\indent Next, if the codimension of $Y$ is 1 (in other words, $X$ is a 1-dimensional sub-bundle indexed by 1), then $S_1=0$ and Formula (3.7) reduces to
\begin{eqnarray*}
S&=&f^2S_2\nonumber\\
    \qquad&+&\left(2e_{\gamma}(c^1_{\gamma 1})-c^1_{\alpha 1}c^1_{\alpha 1}-\frac{1}{2}c^1_{1\beta}c^1_{1\beta}-\frac{1}{2}c^1_{\gamma 1}c^1_{\gamma 1}\right)f^2\nonumber\\
    \qquad&+&\left(2e_1(c^{\beta}_{1\beta})-c^{\gamma}_{1\gamma}c^{\beta}_{1\beta}-\frac{1}{2}c^{\alpha}_{\gamma 1}c^{\gamma}_{\alpha 1}-\frac{1}{2}c^{\alpha}_{1\beta}c^{\alpha}_{1\beta}\right)\nonumber\\
    &=&f^2S_2\nonumber\\
    \qquad&+&\left(2e_{\gamma}(c^1_{\gamma 1})-2c^1_{\alpha 1}c^1_{\alpha 1}\right)f^2\nonumber\\
    \qquad&+&\left(2e_1(c^{\beta}_{1\beta})-c^{\gamma}_{1\gamma}c^{\beta}_{1\beta}-\frac{1}{2}c^{\alpha}_{\gamma 1}c^{\gamma}_{\alpha 1}-\frac{1}{2}c^{\alpha}_{1\beta}c^{\alpha}_{1\beta}\right),\nonumber
\end{eqnarray*}
where the Greek indices run from 2, 3, etc.\\
\indent From Frobenius' Theorem, a sub-bundle $Y$ to the tangent bundle $TM$ of a manifold $M$ is involutive if and only if $Y$ is also a tangent space to a foliation $\mathcal{F}$ of $M$.
\begin{theorem}[Frobenius]
Let $(M,g)$ be a Riemannian manifold, and $B=\{e_1,\cdots, e_r,e_{r+1},\cdots e_n\}$ be a local adapted orthonormal frame. Then, using the notation below, a sub-bundle $Y$ of $M$ is involutive if and only if $c^i_{\alpha\beta}=0$ for all $\alpha\,,\beta\,, i$.
\end{theorem}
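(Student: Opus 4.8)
The statement is the frame-level characterization of involutivity, so the deep analytic content of Frobenius' theorem (the passage from an involutive distribution to an integrating foliation) is not what is at issue here; that implication is already quoted in the sentence preceding the statement. What remains to prove is the purely algebraic equivalence between involutivity of $Y$ and the vanishing of the mixed structure functions $c^i_{\alpha\beta}$, and the plan is to establish the two implications separately by unwinding the definition of the structure functions in Definition 1.1.

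For the forward direction I would assume $Y$ is involutive and simply test the definition on the frame sections themselves. Since $e_\alpha$ and $e_\beta$ are sections of $Y$ whenever $\alpha,\beta$ are Greek indices, involutivity forces $[e_\alpha,e_\beta]$ to again be a section of $Y$. Expanding the bracket in the full adapted frame gives
$$[e_\alpha,e_\beta]=\sum_i c^i_{\alpha\beta}e_i+\sum_\gamma c^\gamma_{\alpha\beta}e_\gamma,$$
and because $Y=\operatorname{span}\{e_{r+1},\dots,e_n\}$ is complementary to $X=\operatorname{span}\{e_1,\dots,e_r\}$, the requirement that this lie in $Y$ is exactly that every $X$-component $c^i_{\alpha\beta}$ (with $1\le i\le r$) vanish. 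This direction is essentially immediate.

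The reverse direction is where the real, if modest, care is needed, because involutivity is a condition on \emph{all} smooth sections of $Y$, not merely on the $s$ frame vectors $e_\alpha$. The plan is to assume $c^i_{\alpha\beta}=0$ for all Roman $i$ and Greek $\alpha,\beta$, take two arbitrary sections $U=\sum_\alpha U^\alpha e_\alpha$ and $V=\sum_\beta V^\beta e_\beta$ of $Y$ with smooth coefficient functions, and compute $[U,V]$ by the Leibniz identity $[\phi A,\psi B]=\phi\psi[A,B]+\phi A(\psi)B-\psi B(\phi)A$. The two derivative terms produce linear combinations of the $e_\beta$ and $e_\alpha$, which are manifestly tangent to $Y$; the remaining bilinear term $\sum_{\alpha,\beta}U^\alpha V^\beta[e_\alpha,e_\beta]$ has its $X$-components governed precisely by the $c^i_{\alpha\beta}$, which vanish by hypothesis. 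Hence $[U,V]\in Y$ and $Y$ is involutive.

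I anticipate the only genuine obstacle is keeping this module-theoretic point honest: one must resist concluding involutivity from the frame brackets alone and instead verify that the function-multiplied terms introduced by the Leibniz rule always land back in $Y$. Since those terms are built from the $e_\alpha$ with scalar coefficients, this is transparent, so no analytic estimate and in fact no use of the metric beyond the orthonormality that furnishes the adapted frame and the splitting $TM=X\oplus Y$ is needed.
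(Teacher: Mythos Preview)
Your argument is correct. The paper, however, does not supply its own proof of this statement: it records the result as the classical Frobenius theorem, rephrased in terms of the structure functions $c^i_{\alpha\beta}$, and moves on. So there is no approach in the paper to compare against; your proposal simply fills in the omitted verification. The care you take in the reverse direction---passing from the vanishing of the frame brackets' $X$-components to involutivity on \emph{arbitrary} smooth sections via the Leibniz rule---is exactly the point one should not elide, and you handle it cleanly.
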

\indent A special type of involutive sub-bundle of a manifold is the one where the metric is bundle-like with respect to the sub-bundle.
\begin{definition}[Section IV, Proposition 4.2 in \cite{MR705126}]
Let $(M,g)$ be a Riemannian manifold and $Y$ be an involutive sub-bundle of $M$. Let $X=Y^{\perp}$. Then we say the metric $g$ is \textbf{bundle-like} with respect to $Y$ if for all $x\in M$, there exists a local adapted orthonormal frame $B=\{e_1,\cdots, e_r,e_{r+1},\cdots,e_n\}$ defined on an open neighborhood $U$ of $x$ so that $X|_U=\text{span}\{e_1,\cdots, e_r\}_U\,, Y|_U=\text{span}\{e_{r+1},\cdots, e_n\}_U$, and for all $\alpha, j$, $[e_{\alpha},e_j]\in\Gamma(U,Y)$.
\end{definition}
\indent As a consequence, we have
\begin{corollary}
Let $(M,g)$ be a Riemannian manifold and $Y$ be an involutive sub-bundle of $M$. Let $X=Y^{\perp}$. Then  $g$ is bundle-like with respect to $Y$ if and only if for all $x\in M$ there exists a local adapted orthonormal frame $B=\{e_1,\cdots, e_r,e_{r+1},\cdots,e_n\}$ in an open neighborhood of $x$ so that for all $\alpha, j,k$, $c^k_{\alpha j}=0$.
\end{corollary}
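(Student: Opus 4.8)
The plan is to prove the biconditional frame-by-frame. That is, for a \emph{fixed} local adapted orthonormal frame $B$ I will show that the bracket condition ``$[e_\alpha,e_j]\in\Gamma(U,Y)$ for all $\alpha,j$'' appearing in Definition 4.8 is equivalent to the algebraic condition ``$c^k_{\alpha j}=0$ for all $\alpha,j,k$''. Once this per-frame equivalence is in hand, the outer quantifiers match verbatim: Definition 4.8 asserts that for every $x\in M$ there exists an adapted frame satisfying the bracket condition, while the corollary asserts that for every $x$ there exists an adapted frame satisfying the vanishing condition, so the two existential statements are interchangeable and the equivalence for bundle-likeness follows at once. Note that no compatibility between distinct frames must be checked, which removes the only potential source of difficulty.

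The central observation is that, for an adapted frame, the module of sections $\Gamma(U,Y)$ is precisely the $C^{\infty}(U)$-span of $\{e_{r+1},\dots,e_n\}$, since ``adapted'' means $Y|_U=\operatorname{span}\{e_{r+1},\dots,e_n\}$. Hence a vector field on $U$ lies in $\Gamma(U,Y)$ exactly when all of its $X$-components vanish. Using Notation 3.1, I would split the Lie bracket through the structure functions as
$$[e_{\alpha},e_j]=\sum_{k=1}^{r}c^{k}_{\alpha j}e_k+\sum_{\gamma=r+1}^{n}c^{\gamma}_{\alpha j}e_{\gamma},$$
in which the first sum is the $X$-part (the lowercase-indexed components) and the second sum is the $Y$-part.

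For the forward direction I assume $g$ is bundle-like with respect to $Y$, fix the frame supplied by Definition 4.8, and read directly off the displayed expansion that $[e_{\alpha},e_j]\in\Gamma(U,Y)$ forces the $X$-part to vanish; since $e_1,\dots,e_r$ are linearly independent over $C^{\infty}(U)$, this yields $c^{k}_{\alpha j}=0$ for every lowercase $k$ and every $\alpha,j$. For the converse I assume the vanishing condition holds in some adapted frame and run the expansion in reverse: with all $c^{k}_{\alpha j}=0$, only the $Y$-terms survive, so $[e_{\alpha},e_j]=\sum_{\gamma}c^{\gamma}_{\alpha j}e_{\gamma}\in\Gamma(U,Y)$, which is exactly the defining property of Definition 4.8. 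The standing involutivity hypothesis on $Y$ is what makes bundle-likeness meaningful (it is already built into Definition 4.8), but it plays no role in this translation step and need not be invoked.

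I do not expect any genuine geometric or analytic obstacle here; the content is purely a dictionary between the coordinate-free bracket condition and its expression in structure functions. The one point requiring care is index bookkeeping under Notation 3.1, namely keeping straight that $\alpha$ ranges over the $Y$-indices while $j$ and $k$ range over the $X$-indices, so that $c^{k}_{\alpha j}$ is correctly identified as an $X$-component of $[e_{\alpha},e_j]$ rather than a $Y$-component.
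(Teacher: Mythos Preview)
Your proposal is correct and matches the paper's approach: the paper records this corollary with no proof beyond the phrase ``As a consequence, we have,'' treating it as an immediate rephrasing of the bundle-like definition in terms of structure functions, and your argument is precisely the one-line unpacking of that rephrasing (expand $[e_\alpha,e_j]$ in the adapted frame and read off the $X$-components). The only cosmetic slip is that the bundle-like definition you cite as ``Definition 4.8'' is Definition 4.2 in the paper's numbering.
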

\indent If $Y$ is a 1-dimensional foliation, indexed by 1, then $S_2=0$, and by Theorem 4.3, the structure functions $c^i_{\alpha\beta}=0$. Formula (3.7) reduces to
\begin{eqnarray*}
S&=&S_1\nonumber\\
    \qquad&+&\left(2e_1(c^j_{1 j})-c^k_{1 k}c^j_{1 j}-\frac{1}{2}c^k_{i1}c^i_{k1}-\frac{1}{2}c^i_{1 j}c^i_{1 j}\right)f^2\nonumber\\
    \qquad&+&\left(2e_k(c^{1}_{k1})-c^1_{i1}c^{1}_{i1}-2c^{1}_{i1}c^j_{ij}-\frac{1}{2}c^{1}_{1 j}c^{1}_{1 j}-c^i_{1 j}c^{1}_{ij}-\frac{1}{2}c^{1}_{k1}c^{1}_{k1}\right)\nonumber\\
    \qquad&-&\frac{1}{4}\left(\frac{1}{f^2}\right)(c^{1}_{kj}c^{1}_{kj})\\
    &=&S_1\nonumber\\
    \qquad&+&\left(2e_1(c^j_{1 j})-c^k_{1 k}c^j_{1 j}-\frac{1}{2}c^k_{i1}c^i_{k1}-\frac{1}{2}c^i_{1 j}c^i_{1 j}\right)f^2\nonumber\\
    \qquad&+&\left(2e_k(c^{1}_{k1})-2c^1_{i1}c^1_{i1}-2c^{1}_{i1}c^j_{ij}-c^i_{1 j}c^{1}_{ij}\right)\nonumber\\
    \qquad&-&\frac{1}{4}\left(\frac{1}{f^2}\right)(c^{1}_{kj}c^{1}_{kj}),
\end{eqnarray*}
where the Roman indices run from 2, 3, etc.\\
\indent Finally, if $Y$ is a 1-dimensional tangent bundle to a Riemannian foliation with a bundle-like metric, combining Theorem 4.3 and Corollary 4.5, together with the fact that $S_2=0$, we obtain from Formula (3.7)
\begin{eqnarray*}
S&=&S_1\nonumber\\
    \qquad&+&\left(2e_k(c^{1}_{k1})-2c^1_{i1}c^1_{i1}-2c^{1}_{i1}c^j_{ij}-c^i_{1 j}c^{1}_{ij}\right)\nonumber\\
    \qquad&-&\frac{1}{4}\left(\frac{1}{f^2}\right)(c^{1}_{kj}c^{1}_{kj}),
\end{eqnarray*}
where the Roman indices range from 2, 3, etc.
\subsection{Collapsing the metric along the foliation sub-bundle $Y$} In this section, we will discuss the collapse of the metric along the sub-bundle $Y$ and what consequence will that have on the scalar curvature of $M$.
\begin{definition}
A distribution $Y$ of $M$ is called \textbf{everywhere non-involutive} if there exists two vector fields $U,V\in \Gamma(Y)$ so that $[U,V]_x\notin Y_x$ for all $x\in M$. In other words, using our notation above, $Y$ is everywhere non-involutive if and only if $c^i_{\gamma\beta}\neq 0$ for all $i\,,\gamma\,,\beta$.
\end{definition}
\begin{corollary}
If $Y$ is everywhere non-involutive, then for sufficiently large $f$, $S<0$.
\end{corollary}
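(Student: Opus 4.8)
The plan is to read formula (3.7) as a polynomial in the collapsing parameter $f$ and to isolate its highest-order term. Grouping the right-hand side of (3.7) by powers of $f$, we may write
\begin{equation*}
S = A f^4 + B f^2 + C + D f^{-2},
\end{equation*}
where the leading coefficient is
\begin{equation*}
A = -\frac{1}{4}\,c^i_{\gamma\beta}c^i_{\gamma\beta} = -\frac{1}{4}\sum_{i=1}^{r}\sum_{\gamma,\beta=r+1}^{r+s}\bigl(c^i_{\gamma\beta}\bigr)^2,
\end{equation*}
and $B$, $C$, $D$ collect the remaining structure-function and derivative terms appearing in (3.7), namely $B = S_2 + (\cdots)$, $C = S_1 + (\cdots)$, and $D = -\tfrac{1}{4}c^{\alpha}_{kj}c^{\alpha}_{kj}$. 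Crucially, $A$, $B$, $C$, $D$ do not depend on $f$.

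By Definition 4.4, the hypothesis that $Y$ is everywhere non-involutive means precisely that the sum $\sum_{i,\gamma,\beta}(c^i_{\gamma\beta})^2$ is strictly positive at every point of $M$; equivalently, $A < 0$ pointwise. Therefore, at each fixed point $x \in M$ the expression $f \mapsto S(x)$ is a quartic in $f$ with strictly negative leading coefficient, so $S(x) \to -\infty$ as $f \to \infty$. In particular, for each $x$ there is a threshold beyond which $S(x) < 0$, which already yields the pointwise decrease-without-bound asserted in the introduction.

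To obtain a single threshold $f_0$ making $S < 0$ on all of $M$ at once, I would invoke compactness of $M$: the structure functions $c^P_{MQ}$ and their frame derivatives are continuous, hence the coefficients $A, B, C, D$ are bounded, and by continuity together with pointwise strict negativity the leading coefficient satisfies $A \le -\varepsilon$ for some $\varepsilon > 0$. Writing $|B| \le B_0$, $|C| \le C_0$, $|D| \le D_0$ uniformly, we get $S \le -\varepsilon f^4 + B_0 f^2 + C_0 + D_0 f^{-2}$, whose right-hand side tends to $-\infty$ as $f \to \infty$; choosing $f_0$ so that this bound is negative forces $S < 0$ everywhere for all $f > f_0$.

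I expect the only genuine subtlety to be this passage from pointwise to uniform negativity: it is the lower-order coefficients $B$ and $C$—which contain the derivative terms $e_{\gamma}(c^j_{\gamma j})$, $e_k(c^{\beta}_{k\beta})$ and the restricted scalar curvatures $S_1, S_2$—that must be kept bounded, and this is exactly where compactness (or an a priori bound on the frame and its derivatives) enters. The algebraic heart of the argument, namely that the $f^4$ coefficient carries the correct negative sign and eventually dominates the $f^2$, constant, and $f^{-2}$ contributions, follows immediately from (3.7) together with the sign condition in Definition 4.4.
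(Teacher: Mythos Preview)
Your proposal is correct and follows essentially the same approach as the paper: both arguments read (3.7) as a polynomial in $f$, observe that the $f^4$ coefficient $-\tfrac{1}{4}\sum(c^i_{\gamma\beta})^2$ is strictly negative under the everywhere non-involutive hypothesis, and conclude that this term dominates for large $f$. You go further than the paper by explicitly addressing the passage from pointwise to uniform negativity via compactness; the paper's proof leaves this implicit and simply asserts that the leading term ``will dominate.''
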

\begin{proof}
Note that when $Y$ is everywhere non-involutive, then $c^i_{\gamma\beta}\neq 0$. By Formula (3.7), the coefficient of $f^4$ is $-\frac{1}{4}(c^i_{\gamma\beta})^2<0$. Therefore, for sufficiently large $f$, this term will dominate the remaining terms in the formula, making $S<0$ overall.
\end{proof}
\indent This is another way to obtain a metric of negative scalar curvature on a manifold $M$.\\
\indent The condition in Corollary 4.3 serves as an inspiration for us to define a new kind of sub-bundle, that where the metric is nearly positively bundle-like with respect to the sub-bundle.
\begin{definition}
Let $(M,g)$ be a Riemannian manifold. We say that $g$ is \textbf{nearly positively bundle-like (or NPB)} with respect to an involutive vector sub-bundle $Y$ of $M$ if there exists a local adapted orthonormal frame in an open neighborhood around every point $x\in M$ so that $S_2+2e_{\gamma}(c^j_{\gamma j})-c^k_{\alpha k}c^j_{\alpha j}-\frac{1}{2}c^k_{i\beta}c^i_{k\beta}-\frac{1}{2}c^i_{\gamma j}c^i_{\gamma j}>0$, where $S_2$ is the scalar curvature of $M$ restricted to $Y$.
\end{definition}
\indent The reason for such a name is that if $S_2>0$ and $g$ is bundle-like with respect to $Y$, then by the above definition, $g$ is also nearly positively bundle-like with respect to $Y$.\\
\indent Returning to formula (3.7), we have the following corollary.
\begin{theorem}
If the sub-bundle $Y$ is involutive, then the formula in Theorem 14 becomes
\begin{eqnarray*}
S&=&S_1+f^2S_2\\
    \qquad&+&\left(2e_{\gamma}(c^j_{\gamma j})-c^k_{\alpha k}c^j_{\alpha j}-2c^k_{\alpha k}c^{\beta}_{\alpha\beta}-\frac{1}{2}c^k_{i\beta}c^i_{k\beta}-c^i_{\gamma\beta}c^{\gamma}_{i\beta}-\frac{1}{2}c^i_{\gamma j}c^i_{\gamma j}\right)f^2\\
    \qquad&+&\left(2e_k(c^{\beta}_{k\beta})-c^{\gamma}_{i\gamma}c^{\beta}_{i\beta}-2c^{\gamma}_{i\gamma}c^j_{ij}-\frac{1}{2}c^{\alpha}_{\gamma j}c^{\gamma}_{\alpha j}-c^i_{\gamma j}c^{\gamma}_{ij}-\frac{1}{2}c^{\alpha}_{k\beta}c^{\alpha}_{k\beta}\right)\\
    \qquad&-&\frac{1}{4}\left(\frac{1}{f^2}\right)(c^{\alpha}_{kj}c^{\alpha}_{kj}).
\end{eqnarray*}
\indent Specifically, if $g$ is also NPB with respect to $Y$, then for sufficiently large $f$, if $S_2>0$, then $S>0$. In particular, $g'=g_X\oplus \left(\frac{1}{f^2}g_Y\right)$ is a PSC metric on $M$.
\end{theorem}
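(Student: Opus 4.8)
The plan is to split the argument into the claimed simplification of Formula (3.7) and then the positivity statement. The formula itself is immediate from Frobenius: since $Y$ is involutive, the Frobenius criterion stated above gives $c^i_{\gamma\beta}=0$ for all Greek $\gamma,\beta$ and all Roman $i$. Substituting this into Formula (3.7) annihilates the quartic term $-\frac{1}{4}f^4 c^i_{\gamma\beta}c^i_{\gamma\beta}$, leaving precisely the three displayed groups of terms (the residual $-c^i_{\gamma\beta}c^{\gamma}_{i\beta}$ inside the $f^2$-bracket is likewise zero but may be carried along formally). This step is a direct substitution and requires no genuine computation.

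For the positivity claim I would fix, near a point of $M$, an adapted orthonormal frame realizing the NPB condition of Definition 4.6, and regard $S$ as a Laurent polynomial in the constant $f$, namely $S = Af^2 + B + Cf^{-2}$, where $A$ collects $S_2$ together with the first bracket of the simplified formula, $B = S_1$ plus the $f$-independent bracket, and $C = -\frac{1}{4}(c^{\alpha}_{kj})^2 \le 0$. The role of the NPB hypothesis is exactly to force the leading coefficient $A>0$ at each point; here one must reconcile the stray term $-2c^k_{\alpha k}c^{\beta}_{\alpha\beta}$ occurring in the $f^2$-bracket with the expression in Definition 4.6 (in a bundle-like frame $c^k_{\alpha k}=0$, so this term vanishes, which is consistent with the remark following the definition). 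Granting $A>0$, note that $Cf^{-2}\le 0$ and tends to $0$ as $f\to\infty$, while $B$ does not depend on $f$, so $S \ge Af^2 + B - \frac{1}{4}(c^{\alpha}_{kj})^2 f^{-2}$, and the sign of $S$ for large $f$ is governed entirely by the leading term $Af^2$.

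Consequently, for $f$ large enough $S>0$, and $g' = g_X\oplus\left(\frac{1}{f^2}g_Y\right)$ is a PSC metric. The delicate point, and the step I expect to be the main obstacle, is that ``sufficiently large'' must be uniform across $M$: the pointwise threshold beyond which $Af^2$ dominates $B$ and $Cf^{-2}$ depends a priori on $A,B,C$ at the point in question. I would resolve this using compactness of $M$, so that continuity yields a positive lower bound $A \ge a_0>0$ and finite upper bounds on $|B|$ and on $(c^{\alpha}_{kj})^2$, whence $S \ge a_0 f^2 - \sup|B| - \frac{1}{4}\sup(c^{\alpha}_{kj})^2\, f^{-2} > 0$ for all large $f$ simultaneously. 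The hard part is precisely controlling these frame-dependent coefficients uniformly, since the NPB condition only guarantees the existence of a good adapted frame in a neighborhood of each point rather than a single global one; once uniform positivity of $S$ is established, the conclusion that $g'$ is PSC is immediate.
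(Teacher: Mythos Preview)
Your proposal is correct and follows the same line as the paper: the paper's own argument is simply that involutivity kills the $f^4$ term via Frobenius, and then the NPB hypothesis makes the $f^2$ coefficient positive so that it dominates as $f\to\infty$. Your treatment is in fact more careful than the paper's one-line justification---you rightly flag the stray $-2c^k_{\alpha k}c^{\beta}_{\alpha\beta}$ term (which is absent from Definition~4.6) and the need for compactness to make the threshold on $f$ uniform over $M$, neither of which the paper addresses.
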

\indent This is easy to see, since if $g$ is NPB with respect to $Y$, then the coefficient of $f^2$ in the formula above is always positive, and as $f$ increases without bound, then the $f^2$ term dominates, making $S>0$ as a whole.\\
\indent The geometric meaning of the corollary is that if we know that $g$ is nearly positively bundle-like with respect to $Y$, then we can collapse $g$ in the foliation direction so that the total scalar curvature of $M$ will be positive.\\
\indent The following corollary is most likely known to experts.
\begin{corollary}
If a Riemannian foliation on a manifold $M$ has a bundle-like metric such that the leaves have positive scalar curvature, then there exists a PSC metric on $M$.
\end{corollary}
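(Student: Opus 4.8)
The plan is to deduce this corollary directly from Theorem 4.7, whose hypotheses are that $Y$ is involutive, that $g$ is NPB with respect to $Y$ (Definition 4.6), and that $S_2>0$, and whose conclusion is precisely that $g'=g_X\oplus\left(\frac{1}{f^2}g_Y\right)$ is a PSC metric on $M$ for $f$ sufficiently large. Thus the whole task reduces to verifying those three hypotheses from the given data, namely a Riemannian foliation $\mathcal{F}$ with a bundle-like metric $g$ whose leaves have positive scalar curvature. I would begin by setting $Y=T\mathcal{F}$ and $X=Y^{\perp}$. Since $\mathcal{F}$ is a foliation, $Y$ is involutive, so Frobenius' theorem gives $c^i_{\alpha\beta}=0$ for all Greek $\alpha,\beta$ and Roman $i$. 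Because $g$ is bundle-like, the characterization of bundle-like metrics (via $c^k_{\alpha j}=0$) supplies, around every point, a local adapted orthonormal frame in which $c^k_{\alpha j}=0$ for every Greek $\alpha$ and Roman $j,k$; I would fix such frames and carry out all remaining computations in them.

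The core step is to show that in these frames the four correction terms of the NPB inequality vanish identically. Using $c^k_{\alpha j}=0$ together with the antisymmetry of the structure functions in their two lower indices, one checks term by term that $c^j_{\gamma j}=0$ (so $2e_{\gamma}(c^j_{\gamma j})=e_{\gamma}(0)=0$), that $c^k_{\alpha k}=c^j_{\alpha j}=0$, that $c^k_{i\beta}=-c^k_{\beta i}=0$, and that $c^i_{\gamma j}=0$; each is an instance of $c^k_{\alpha j}=0$, possibly after swapping the two lower indices. Hence the defining NPB quantity $S_2+2e_{\gamma}(c^j_{\gamma j})-c^k_{\alpha k}c^j_{\alpha j}-\frac{1}{2}c^k_{i\beta}c^i_{k\beta}-\frac{1}{2}c^i_{\gamma j}c^i_{\gamma j}$ collapses to simply $S_2$, which is exactly the content of the remark following Definition 4.6. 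It therefore remains only to check that $S_2>0$.

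For this last point I would identify $S_2$ with the intrinsic scalar curvature of the leaves. Integrability of $Y$ means that for Greek $\alpha,\beta$ one has $[e_{\alpha},e_{\beta}]=c^{\gamma}_{\alpha\beta}e_{\gamma}$ tangent to each leaf $L$, so the Greek-indexed structure functions restrict to the structure functions of the induced orthonormal frame $\{e_{r+1},\dots,e_n\}$ on $L$, and Theorem 2.3 applied intrinsically on $L$ returns precisely the expression that defines $S_2$. Consequently $S_2$ equals the scalar curvature of the leaves, which is positive by hypothesis. With $Y$ involutive, $g$ NPB with respect to $Y$, and $S_2>0$ all in hand, Theorem 4.7 applies and yields the desired PSC metric on $M$.

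I expect the one step demanding genuine care, rather than pure bookkeeping, to be the identification of the formula's $S_2$ with the intrinsic scalar curvature of the leaves: one must confirm that the ambient derivatives $e_{\gamma}(\cdot)$ and ambient brackets entering Theorem 2.3 agree with their intrinsic leaf counterparts. This is guaranteed exactly because the frame vectors $e_{r+1},\dots,e_n$ are tangent to the leaves, so that brackets computed in $M$ and restricted to $L$ coincide with brackets computed on $L$. Everything else is a direct consequence of the vanishing of $c^k_{\alpha j}$ in a bundle-like frame.
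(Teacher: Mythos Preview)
Your proposal is correct and follows essentially the same argument as the paper: both use involutivity ($c^i_{\alpha\beta}=0$) together with the bundle-like condition ($c^k_{\alpha j}=0$) to kill the extra terms in the $f^2$-coefficient of formula~(3.7), leaving $S_2>0$ as the dominant contribution for large $f$. The only cosmetic difference is that you route the conclusion through Theorem~4.7 and the remark after Definition~4.6, whereas the paper plugs the vanishing conditions directly into formula~(3.7); your added care in identifying $S_2$ with the intrinsic leaf scalar curvature is a point the paper takes for granted.
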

\begin{proof}
\indent If the sub-bundle $Y$ is a tangent bundle of a Riemannian foliation with a bundle-like metric, then Formula (3.7) reduces to
\begin{eqnarray*}
S&=&S_1+f^2S_2\nonumber\\
    \qquad&+&\left(2e_k(c^{\beta}_{k\beta})-c^{\gamma}_{i\gamma}c^{\beta}_{i\beta}-2c^{\gamma}_{i\gamma}c^j_{ij}-\frac{1}{2}c^{\alpha}_{\gamma j}c^{\gamma}_{\alpha j}-c^i_{\gamma j}c^{\gamma}_{ij}-\frac{1}{2}c^{\alpha}_{k\beta}c^{\alpha}_{k\beta}\right)\nonumber\\
    \qquad&-&\frac{1}{4}\left(\frac{1}{f^2}\right)(c^{\alpha}_{kj}c^{\alpha}_{kj}).
\end{eqnarray*}
\indent Since $S_2>0$ by hypothesis, for $f$ sufficiently large, $S>0$ and the corollary is proven.
\end{proof}
\section{Examples and remarks}
In this section, we will provide two examples to illustrate the results presented above. Let's begin with an example to Corollary 4.5.
\subsection{Rescaling along an everywhere non-involutive bundle on the 3-sphere}
 Let's begin with an example to Corollary 4.5. Although $\mathbb{S}^3$ has constant positive sectional curvature, collapsing the sphere metric along an everywhere non-involutive sub-bundle will result in negative scalar curvature for $\mathbb{S}^3$.\\
\indent Let $M=\mathbb{S}^3$ be the unit 3-sphere. In other words, $M=\{(x^1,x^2,x^3,x^4):(x^1)^2+(x^2)^2+(x^3)^2+(x^4)^2=1\}$ with the sphere metric induced from the usual metric in $\mathbb{R}^4$.\\
\indent An adapted orthonormal frame with respect to the above metric is $X=-x^4\partial_1-x^3\partial_2+x^2\partial_3+x^1\partial_4\,,Y=x^3\partial_1-x^4\partial_2-x^1\partial_3+x^2\partial_4\,, Z=-x^2\partial_1+x^1\partial_2-x^4\partial_3+x^3\partial_4$.\\
\indent Furthermore, by a simple calculation, one can see that $[X,Y]=-2Z$, so the sub-bundle of $TM$ spanned by $X$ and $Y$ is non-involutive. Just like in the set up of our problem, we will scale the sphere metric in the part involving $X$ and $Y$ by $\frac{1}{f^2}$, while keeping the rest of the metric unchanged. Hence, the adapted orthonormal frame with respect to the new metric as described above is $\{fX,fY,Z\}=\{X',Y',Z'\}$.\\
\indent Now, also by a simple calculation, we can see that $[X',Y']=-2f^2Z'\,,[X',Z']=2Y'\,,[Y',Z']=-2X$.\\
\indent Denoting the vector fields $X',Y',Z'$ by the numbering 1,2,3 respectively, we can quickly see from the calculations above that the structure functions in this case are
\begin{align*}
    c^1_{12}&=c^2_{12}=0\,,c^3_{12}=-2f^2\\
    c^1_{13}&=c^3_{13}=0\,,c^2_{13}=2\\
    c^1_{23}&=-2\,, c^2_{23}=c^3_{23}=0
\end{align*}
\indent Again, now we will calculate the scalar curvature of $M$ using our formula in Theorem 2.3. In this case, since the structure functions are constants, the first term of the formula above vanishes. Furthermore, the structure functions of the frame have the special property that $c^i_{ij}=0$. Hence, the second terms of the formula above also vanishes.\\
\indent Hence, all in all, the scalar curvature in this case turns to
\begin{align*}
    S&=\sum\limits_{I,J,K=1}^3 \left[-\frac{1}{2}c^I_{KJ}c^K_{IJ}-\frac{1}{4}(c^I_{KJ})^2\right].
\end{align*}
\indent Plugging the structure functions above into the formula, we have
    $$S=-2f^4+8f^2$$
\indent From here, it is easy to see that as in Corollary 4.5, for $f>2$, $S<0$ at all points of $\mathbb{S}^3$. 

\subsection{Rescaling along an NPB foliation on a manifold}
Here is an example of a metric which is nearly bundle-like to a sub-bundle of a manifold, which verifies the existence of such a condition. Note that the foliation described below is not Riemannian, and there does not exist a bundle-like metric for this foliation.\\
\indent Let $N$ be the 3-manifold defined as $\mathbb{R}\times T^2/\mathbb{Z}$, where $T^2=\mathbb{R}^2/\mathbb{Z}^2$ and $m\in\mathbb{Z}$ acts on $\mathbb{R}\times T^2$ by $m(t,x)=(t+m,A^m x)$, where $A=\begin{pmatrix}
1&1\\
0&1
\end{pmatrix}$. Let $\mathbb{S}^2$ be the unit sphere in $\mathbb{R}^3$ with the standard sphere metric. Then, define $M$ to be a 7-manifold so that $M:=N\times \mathbb{S}^2\times H$, where $H$ is a closed surface of constant Gaussian curvature -4. Let $\gamma_t$ be the family of the $t$-parameter curves in $N$, and define the leaves of $M$ as $\gamma_t\times\mathbb{S}^2$.\\
\indent The underlying metric in the $\partial_t\,,\partial_{x_1}\,,\partial_{x_2}\,,\partial_p\,,\partial_q\,,\partial_u\,,\partial_v$ basis, where the first three coordinates belong to the manifold in the example, the next two belong to $\mathbb{S}^2$, and the last two belongs to $H$, is
\begin{align*}
(g_{ij})=\begin{pmatrix}
1&0&0&0&0\\
0&1&-t&0&0\\
0&-t&1+t^2&0&0\\
0&0&0&1&0\\
0&0&0&0&\sin p
\end{pmatrix}\oplus (g_H),
\end{align*}
where $g_H$ is the metric on $H$ that gives rise to constant Gaussian curvature -4.\\
\indent Notice that in this metric, a local adapted orthonormal frame of $TM$ is $\{e_1=\partial_t\,,e_2=\partial_{x_1}\,,e_3=t\,\partial_{x_1}+\partial_{x_2}\,,e_4=\partial_p\,,e_5=\csc p\,\partial_q\,,e_6\,, e_7\}$, where $e_6$ and $e_7$ are vector field in a local adapted orthonormal frame of $TH$. Furthermore, let $\{e_1,e_4,e_5\}$ be the foliation direction of $M$. Using our notation in the paper, therefore, the sub-bundle $Y$ is spanned by the above three vector fields and the other two vector fields span $X$. In the calculations below, we mainly focus on the metric $(g_{ij})$.\\
\indent In the leafwise direction, the metric is the direct sum of the metric on $\mathbb{S}^2$ and the identity metric on $\mathbb{R}$, so it is easy to see that in our case, $S_2=2$. Another calculation reveals that $[e_1,e_3]=e_2$ and all other brackets between a leaf and a non-leaf direction is 0. Hence, only $c^2_{13}=1$, and the other structure functions vanish. Thus, we see that $2e_{\gamma}(c^j_{\gamma j})-c^k_{\alpha k}c^j_{\alpha j}-\frac{1}{2}c^k_{i\beta}c^i_{k\beta}-\frac{1}{2}c^i_{\gamma j}c^i_{\gamma j}=-\frac{1}{2}$ after plugging in and simplifying our expression.\\
\indent In the end, we have $S_2+2e_{\gamma}(c^j_{\gamma j})-c^k_{\alpha k}c^j_{\alpha j}-\frac{1}{2}c^k_{i\beta}c^i_{k\beta}-\frac{1}{2}c^i_{\gamma j}c^i_{\gamma j}=2-\frac{1}{2}=\frac{3}{2}>0$, so indeed $g$ is nearly bundle-like with respect to $Y$. By Theorem 4.5, then, for sufficiently large $f$, the metric
\begin{align*}
(g'_{ij})=\begin{pmatrix}
\frac{1}{f^2}&0&0&0&0\\
0&1&-t&0&0\\
0&-t&1+t^2&0&0\\
0&0&0&\frac{1}{f^2}&0\\
0&0&0&0&\frac{\sin p}{f^2}
\end{pmatrix}\oplus (g_H)
\end{align*}
is a PSC metric. In fact, the scalar curvature of $M$ under this metric can be easily calculated to be $\frac{3}{2}f^2-4$, so as $f$ increases, $S$ will become positive, which verifies Theorem 4.5.\\
\bibliographystyle{amsplain}
\bibliography{reference}

\end{document}